\documentclass{amsart}
\usepackage{amsfonts}
\usepackage{amssymb}
\usepackage{amsmath}
\usepackage{amsthm, comment}
\usepackage{a4wide, algpseudocode, algorithm}
\usepackage{url,hyperref,mathtools}
\usepackage{graphicx} 
\usepackage{dirtytalk}
\usepackage{float}
\usepackage{mdframed}

\DeclarePairedDelimiter{\floor}{\lfloor}{\rfloor}

\def\BF{\mathbb{F}}
\def\BQ{\mathbb{Q}}
\def\BR{\mathbb{R}}
\def\BZ{\mathbb{Z}}

\def\CD{\mathcal{D}}
\def\CE{\mathcal{E}}
\def\CO{\mathcal{O}}
\def\CP{\mathcal{P}}

\def\fp{\mathfrak{p}}

\def\Cl{\mathrm{Cl}}

\def\Ext{\mathrm{Ext}}
\def\Gal{\mathrm{Gal}}

\def\ed{\varepsilon_d}

\newtheorem{Thm}{Theorem}[section]

\newtheorem{Lem}[Thm]{Lemma}
\newtheorem{Prop}[Thm]{Proposition}
\newtheorem{Conj}[Thm]{Conjecture}
\newtheorem{Heur}[Thm]{Heuristic}
\newtheorem{Rem}[Thm]{Remark}
\newtheorem{Ex}[Thm]{Example}

\begin{document}

\title[Quadratic units and cubic fields]{Quadratic units and cubic fields}

\author{Florian Breuer}
\address{School of Information and Physical Sciences\\
University of Newcastle\\
Callaghan\\
Australia}
\email{florian.breuer@newcastle.edu.au}

\author{James Punch}
\address{School of Science, The University of New South Wales, Canberra, Australia}
\email{j.punch@unsw.edu.au}


\subjclass[2020]{Primary 11R11, 11R16, 11R27; Secondary 11R29, 11Y40, 11R45}

\keywords{quadratic fields, fundamental units, discriminants, cubic fields, Pell equation, infrastructure}

\thanks{The authors gratefully acknowledge the use of the National Computational Infrastructure (NCI Australia),  
an NCRIS enabled capability supported by the Australian Government, for providing some of the computational resources that contributed to the results presented in Table \ref{table split by hd} [Project ID: pc06]. 
The first author would like to thank the Alexander-von-Humboldt foundation for support and Bill Allombert for help with Pari/GP}


\begin{abstract}
We investigate Eisenstein discriminants, which are squarefree integers $d \equiv 5 \pmod{8}$ such that the fundamental unit $\varepsilon_d$ of the real quadratic field $K=\mathbb{Q}(\sqrt{d})$ satisfies $\varepsilon_d \equiv 1 \pmod{2\mathcal{O}_K}$. These discriminants are related to a classical question of Eisenstein and have connections to the class groups of orders in quadratic fields as well as to real cubic fields. 
We present numerical computations of Eisenstein discriminants up to $10^{11}$, suggesting that their counting function up to $x$ is approximated by
$\pi_{\mathcal{E}}(x) \approx \frac{1}{3\pi^2}x - 0.024x^{5/6}$.
This supports a conjecture of Stevenhagen while revealing a surprising secondary term,
which is similar to (but subtly different from) the secondary term in the counting function of real cubic fields. 

We include technical details of our computation method, which uses a modified infrastructure approach implemented on GPUs.
\end{abstract}

\maketitle

\section{Introduction}

Let $d \equiv 5 \bmod 8$ be a positive square-free integer. 
Let $K=\BQ(\sqrt{d})$ be the associated real quadratic field with ring of integers
$\CO_K = \BZ[\frac{1+\sqrt{d}}{2}]$ and fundamental unit
$\ed = \frac{x_0 + y_0\sqrt{d}}{2}$.
The prime $2$ is inert in $K/\BQ$.

We denote by $\CO=\BZ[\sqrt{d}]$ the order of conductor 2 in $\CO_K$. The class groups of $\CO_K$ and $\CO$ are related by the following short exact sequence.

\begin{equation}\label{exact sequence}
    1 \rightarrow H_d \longrightarrow \Cl(\CO) \longrightarrow 
    \Cl(\CO_K) \rightarrow 1
\end{equation}

where 
\[
    H_d 
    \cong \left\{ \begin{array}{ll}
         \BZ/3\BZ & \text{if}\quad \ed \equiv 1 \bmod 2\CO_K   \\
         1 & \text{otherwise}.
    \end{array}\right.
\]


We have \cite[Lemma 1.1]{Stevenhagen1996},

\begin{Lem}
    Let $d\equiv 5 \bmod 8$ be square-free. Then the following are equivalent:
    \begin{enumerate}
        \item the fundamental unit satisfies $\ed \equiv 1 \bmod 2\CO_K$;
        \item the equation $x^2 - dy^2 = 4$ has no solution in odd integers $x$ and $y$;
        \item the unit groups $\CO^*$ and $\CO_K^*$ coincide;
        \item $\#\Cl(\CO) = 3\#\Cl(\CO_K)$.
    \end{enumerate}
\end{Lem}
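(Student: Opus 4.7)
The strategy is to show that all four conditions are equivalent to the single statement $\ed\in\CO$. I would prove (1)$\Leftrightarrow$(3), then (2)$\Leftrightarrow$(3), and finally deduce (1)$\Leftrightarrow$(4) from the exact sequence \eqref{exact sequence}.

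For (1)$\Leftrightarrow$(3): since $\CO_K^\ast=\langle-1,\ed\rangle$ and $-1\in\CO$, condition (3) reduces to $\ed\in\CO$, i.e.\ both $x_0,y_0$ even. The relation $x_0^2-dy_0^2=\pm 4$ with $d$ odd forces $x_0\equiv y_0\pmod{2}$, so only the ``both even'' and ``both odd'' cases arise. In the both-even case I would write $\ed=u+v\sqrt d$; the norm equation $u^2-dv^2=\pm 1$ forces $u-v$ odd, and expanding $\ed-1=(u-1-v)+v(1+\sqrt d)$ in the $\BZ$-basis $\{2,1+\sqrt d\}$ of $2\CO_K$ places $\ed-1$ in $2\CO_K$. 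In the both-odd case, writing $\ed=(x_0-y_0)/2+y_0\omega$ with $\omega=(1+\sqrt d)/2$, the odd $\omega$-coefficient $y_0$ shows the image of $\ed$ in $\CO_K/2\CO_K$ is not $1$.

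For (2)$\Leftrightarrow$(3): an odd solution $(x,y)$ of $x^2-dy^2=4$ corresponds exactly to a norm-$1$ unit $(x+y\sqrt d)/2\in\CO_K\setminus\CO$. If $\ed\in\CO$, all units $\pm\ed^n$ lie in $\CO$ and no odd solutions exist. If $\ed\notin\CO$ and $N(\ed)=1$, then $\ed$ itself is such a unit; if $N(\ed)=-1$, one computes $\ed^2=(A+B\sqrt d)/2$ with $A=(x_0^2+dy_0^2)/2$ and $B=x_0y_0$, and the congruence $d\equiv 5\pmod{8}$ forces both $A$ and $B$ odd, yielding an odd solution.

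Finally, (1)$\Leftrightarrow$(4) is immediate from \eqref{exact sequence}: the sequence gives $\#\Cl(\CO)=\#H_d\cdot\#\Cl(\CO_K)$, and by the description of $H_d$ recalled just above the lemma, $\#H_d=3$ precisely when $\ed\equiv 1\pmod{2\CO_K}$. The main obstacle I anticipate is the parity bookkeeping in (1)$\Leftrightarrow$(3): one must confirm that the apparently strong condition of membership in $2\CO_K$ (not merely in $2\BZ+2\BZ\sqrt d$) is automatic once $\ed\in\CO$, using only the norm equation and not any finer structural input.
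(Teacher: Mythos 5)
The paper does not actually prove this lemma: it cites \cite[Lemma~1.1]{Stevenhagen1996} and moves on, so there is no in-text proof to compare against. Your argument, read on its own, is correct. The reduction of (3) to the single condition $\ed\in\CO$ is clean; the parity computations check out: with $\ed=u+v\sqrt d$ the norm forces $u-v$ odd, and $\ed-1=(u-1-v)+v(1+\sqrt d)$ then visibly lies in $2\CO_K = 2\BZ+(1+\sqrt d)\BZ$, while in the both-odd case $\ed=\tfrac{x_0-y_0}{2}+y_0\omega$ has $\omega$-coefficient $y_0$ odd and so cannot reduce to $1$ in $\CO_K/2\CO_K\cong\BF_4$. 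For $(2)\Leftrightarrow(3)$, the bijection between odd solutions of $x^2-dy^2=4$ and norm-one units in $\CO_K\setminus\CO$ is right, and in the norm $-1$ case your parity check on $A=(x_0^2+dy_0^2)/2$ does use $d\equiv 5\pmod 8$ in an essential way, since $x_0^2+dy_0^2\equiv 1+d\equiv 6\pmod 8$ makes $A$ odd --- worth stating explicitly, since this is the one place the hypothesis $d\equiv 5\pmod 8$ (rather than merely $d$ odd) enters.

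One mild circularity worth flagging: you deduce $(1)\Leftrightarrow(4)$ from the displayed description of $H_d$ that precedes the lemma, but in Stevenhagen's paper that description of $H_d$ is itself what the lemma packages. A self-contained route is to prove $(3)\Leftrightarrow(4)$ directly from the class number formula for orders,
\[
h(\CO)\;=\;\frac{h(\CO_K)\,f\,\prod_{p\mid f}\bigl(1-\bigl(\tfrac{d}{p}\bigr)p^{-1}\bigr)}{[\CO_K^*:\CO^*]}
\;=\;\frac{3\,h(\CO_K)}{[\CO_K^*:\CO^*]},
\]
using $f=2$ and $2$ inert (so the local factor is $1+\tfrac12$). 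Then $\#\Cl(\CO)=3\,\#\Cl(\CO_K)$ holds exactly when $[\CO_K^*:\CO^*]=1$, which is (3), and the exact sequence with $H_d\cong\BZ/3\BZ$ in that case is a consequence rather than an input. With that substitution your proof is complete and independent of the surrounding text.
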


Let us call $d$ satisfying the above equivalent conditions {\em Eisenstein discriminants},
since this goes back to an old question of Eisenstein, who asked for a ``criterion'' for $d$ to have this property; 
one wonders if the above lemma would have satisfied Eisenstein. 
The Eisenstein discriminants form sequence A108160 in \cite{oeis}.

In \cite{Stevenhagen1996} Peter Stevenhagen asks what proportion of discriminants satisfy the Eisenstein property. Since a priori $\ed$ can reduce to any of the three non-zero elements of $\CO_K/2\CO_K$, it is natural to conjecture that this proportion is $1/3$.

Let
\[
\CD := \{d \in\BZ_{>0} \;|\; d\equiv 5 \bmod 8, \; \text{$d$ square-free}\}
\]
and denote by
\[
\CE := \{d \in \CD \;|\; \ed \equiv 1 \bmod 2\CO_K\}
\]
the subset of Eisenstein discriminants.

We define the Eisenstein counting function by
\[
\pi_\CE(x) := \sum_{d\in\CE, \; d \leq x} 1, \qquad x\in\BR.
\]
It is well-known (e.g. \cite[\S18.6]{HardyWright}) that
\[
\pi_\CD(x) := \sum_{d \in \CD, \; d \leq x} 1 = \frac{1}{\pi^2}x + O(x^{1/2 + \varepsilon}),
\]
so Stevenhagen's conjecture can be phrased as 

\begin{Conj}[Stevenhagen]
    We have, as $x \rightarrow \infty$,
    \[
    \pi_\CE(x) \sim \frac{1}{3\pi^2}x.
    \]
\end{Conj}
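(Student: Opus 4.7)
The plan is to convert the Eisenstein property into the existence of certain cubic fields over $\BQ$, and then apply density results for cubic fields \`a la Davenport--Heilbronn. By the equivalence $(1)\Leftrightarrow(4)$ of Lemma 1.1, $d\in\CE$ iff $\#\Cl(\CO)=3\#\Cl(\CO_K)$, so $d\in\CE$ iff the ring class group contains a non-trivial $H_d\cong\BZ/3\BZ$ not detected by $\Cl(\CO_K)$. Via class field theory applied to the order $\CO=\BZ[\sqrt{d}]$, this translates to saying that the ring class field $H_\CO$ is a cyclic cubic extension of the Hilbert class field $H$ of $K$, i.e.\ $[H_\CO:H]=3$, and that this cubic extension is ramified precisely at the primes above $2$.

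Any order-$3$ character $\chi$ of $\Cl(\CO)$ whose restriction to $H_d$ is non-trivial yields a cyclic cubic extension $L'/K$ inside $H_\CO$, and since complex conjugation acts on $\Cl(\CO)$ by inversion it sends $\chi$ to $\bar\chi$, forcing $\Gal(L'/\BQ)\cong S_3$. Thus $L'$ contains a non-Galois cubic field $F/\BQ$ whose Galois closure is $L'$, and the conductor--discriminant formula expresses $\mathrm{disc}(F)$ as $d\cdot 2^{2k}$ for some small $k\in\{0,1,2\}$ determined by the local conductor of $L'/K$ at $2\CO_K$. The construction is reversible: from such an $F$ one recovers the corresponding Eisenstein $d$. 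This produces a correspondence (bijective up to higher $3$-rank phenomena on $\Cl(\CO)$) between Eisenstein discriminants $d\leq x$ and non-Galois cubic fields of bounded discriminant whose quadratic resolvent is $\BQ(\sqrt d)$ with $d$ squarefree and $d\equiv 5\bmod 8$.

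The counting step would then invoke the Davenport--Heilbronn theorem for cubic field discriminants, together with the $x^{5/6}$ secondary term established by Bhargava, Shankar, and Tsimerman, which matches the empirically observed secondary term in $\pi_\CE(x)$. The congruence condition $d\equiv 5\bmod 8$ is a local condition at $2$ handled by the standard sieve attached to the Davenport--Heilbronn proof, and an Euler-product computation summing over the possible values of $k$ and aggregating the local factors should recover the conjectured density $\frac{1}{3\pi^2}$.

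The principal obstruction is twofold. First, when the $3$-rank of $\Cl(\CO)$ exceeds $1$, a single Eisenstein $d$ corresponds to several cubic fields, so the correspondence fails to be injective; one would need uniform bounds on average $3$-torsion of these ring class groups tight enough to prove that this over-counting contributes only to error terms. Such bounds are essentially Cohen--Lenstra-type statements that lie beyond current technology. Second, the $2$-adic analysis is delicate: the local conductor of $L'/K$ at $2\CO_K$ takes several distinct values, each feeding a different factor into the Davenport--Heilbronn mass formula, and verifying that these combine precisely to $\frac{1}{3\pi^2}$ rather than to some other rational multiple requires substantial local case-analysis. These two difficulties together explain why Stevenhagen's conjecture has resisted proof despite the rich structural picture above.
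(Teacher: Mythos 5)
This statement is a \emph{conjecture}, and the paper never proves it: it proves only the partial bounds of Theorem~\ref{Thm: improved Stevenhagen} and then supplies numerical evidence for the asymptotic. Your proposal is therefore right not to claim a proof, and the broad strategy you outline — route the Eisenstein property through class field theory of the order $\CO$ to cubic fields, then count cubic fields via Davenport--Heilbronn and its refinements — is exactly Stevenhagen's own approach and the one the paper builds on.

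However, your description of the correspondence has a subtle but important inaccuracy that is in fact the crux of why the conjecture is open. You write that ``the construction is reversible'' and that it yields a correspondence ``bijective up to higher $3$-rank phenomena,'' which suggests the only issue is over-counting when $\Cl(\CO_K)[3]$ is large. But the failure is more severe: by Proposition~\ref{Prop: Stevenhagen}, an Eisenstein $d$ gives rise to a cubic field of discriminant $4d$ only when the short exact sequence (\ref{exact sequence}) \emph{splits}. When $h^*(d)\ge 3$ the sequence may fail to split entirely, in which case $d\in\CE$ but $m(4d)=0$ — an Eisenstein discriminant that is invisible to cubic fields. Thus $\pi_\CE$ can both under- and over-count $\pi_c$, and the difference $\Delta(x)=\pi_\CE(x)-\pi_c(x)$ is genuinely two-sided. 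Controlling $\Delta(x)$ requires knowing how often (\ref{exact sequence}) splits, which in turn requires equidistribution of the extension class in $\Ext^1(\Cl(\CO_K),\BZ/3\BZ)\cong\Cl(\CO_K)[3]$ — a Cohen--Lenstra-type statement beyond current methods, as you do note. The paper's contribution is precisely to measure $\Delta(x)$ numerically (finding an unexpected $x^{5/6}$ bias), not to bound it unconditionally to $o(x)$.
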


By relating Eisenstein discriminants $d$ to real cubic fields with discriminant $4d$, and applying the Davenport-Heilbronn Theorem \cite{DH2}, Stevenhagen was able to make the following progress on his conjecture.

\begin{Thm}[Stevenhagen]
\label{Stevenhagen1}
We have, as $x \rightarrow\infty$,
    \begin{enumerate}
        \item $\displaystyle \pi_\CE(x) \ll \frac{1}{2\pi^2}x$
        \item $\displaystyle \pi_\CE(x) \gg x^{1/2}$.
    \end{enumerate}
\end{Thm}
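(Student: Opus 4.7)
The plan is to translate the Eisenstein property into the existence of a cubic field of discriminant $4d$ via ring class field theory, count such cubic fields using the Davenport--Heilbronn theorem for the upper bound, and exploit the Pell-equation description of $\ed$ for Eisenstein $d$ to derive the lower bound.

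First I would interpret the nontrivial factor $H_d\cong\BZ/3\BZ$ in \eqref{exact sequence} class-field-theoretically. The ring class field $H_\CO$ of the order $\CO = \BZ[\sqrt{d}]$ is an abelian extension of $K=\BQ(\sqrt{d})$ with $\Gal(H_\CO/K)\cong\Cl(\CO)$, and index-$3$ subgroups of $\Cl(\CO)$ that do \emph{not} factor through $\Cl(\CO)\to\Cl(\CO_K)$ correspond to cyclic cubic extensions $M/K$ contained in $H_\CO$ but not in the Hilbert class field of $\CO_K$. Any such $M$ is $S_3$ over $\BQ$, and its three pairwise-isomorphic cubic subfields $L$ satisfy $\disc(L) = \disc(\CO) = 4d$ by the conductor--discriminant formula. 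Conversely, every non-Galois cubic field $L$ with $\disc(L)=4d$, $d\in\CD$, arises this way, forcing $d\in\CE$. Writing $r := r_3(\Cl(\CO_K))$, the exact sequence yields $r_3(\Cl(\CO)) = r+1$ for $d\in\CE$, so the number of such cubic fields per Eisenstein $d$ equals
\[
\tfrac{3^{r+1}-1}{2}-\tfrac{3^r-1}{2} = 3^r.
\]
Setting $N_3(x) := \#\{L\text{ cubic} : \disc(L)=4d,\ d\in\CD,\ d\leq x\}$, this gives $N_3(x) = \sum_{d\in\CE,\,d\leq x} 3^{r_3(\Cl(\CO_K))}$.

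For (1), I would invoke the Davenport--Heilbronn theorem \cite{DH2}, augmented with the local conditions (squarefree at odd primes, $2$-adic discriminant valuation exactly $2$ with odd part $\equiv 5\bmod 8$) defining the fields counted by $N_3(x)$, to obtain $N_3(x)\sim\tfrac{1}{2\pi^2}x$. Since every term in the sum is at least one, dropping the multiplicity immediately yields $\pi_\CE(x)\leq N_3(x)\sim\tfrac{1}{2\pi^2}x$. For (2), equivalence~(3) of the Lemma forces $\ed\in\CO=\BZ[\sqrt{d}]$ for $d\in\CE$, so $\ed = m+n\sqrt{d}$ with integers $m,n\geq 1$ satisfying $m^2-dn^2=\pm 1$; hence $\ed\geq\sqrt{d-1}$ and the regulator satisfies $R_K = \log\ed\geq\tfrac12\log(d-1)$. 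Combined with the class number formula $h_K R_K\ll\sqrt{d}\log d$, this yields $h_K\ll\sqrt{d}$ uniformly over $d\in\CE$ (the logarithms cancel), so $3^{r_3(\Cl(\CO_K))}\leq h_K\ll\sqrt{x}$ and
\[
\pi_\CE(x)\;\geq\;\frac{N_3(x)}{\max_{d\in\CE,\,d\leq x} 3^{r_3(\Cl(\CO_K))}}\;\gg\;\frac{x}{\sqrt{x}}\;=\;\sqrt{x}.
\]

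The main technical obstacle is computing the precise leading constant $\tfrac{1}{2\pi^2}$ in the Davenport--Heilbronn asymptotic for $N_3(x)$. Odd primes combine with squarefreeness to produce the density $\prod_{p>2}(1-p^{-2}) = 8/\pi^2$, but the $2$-adic factor must be extracted individually by classifying cubic \'etale $\BQ_2$-algebras with the correct local discriminant; it is precisely this $2$-adic density that ultimately distinguishes the provable bound $\tfrac{1}{2\pi^2}$ from the conjectural $\tfrac{1}{3\pi^2}$.
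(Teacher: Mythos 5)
The paper does not reprove Theorem~\ref{Stevenhagen1} itself (it is quoted from \cite{Stevenhagen1996}), but its Theorem~\ref{Thm: improved Stevenhagen} refines it and makes the mechanism clear, so I can compare against that.

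Your lower bound argument is essentially the standard one and is correct: by the Davenport--Heilbronn theorem $\pi_c(x)\gg x$; every cubic field of discriminant $4d$ forces $d\in\CE$ (Proposition~\ref{Prop: Stevenhagen}); the multiplicity is bounded by $h^*(d)\leq h_K\ll \sqrt{d}$ once you note $R_K\gg\log d$ and apply the class number formula with $L(1,\chi_d)\ll\log d$; so $\pi_\CE(x)\geq \pi_c(x)/\max_{d\leq x}h^*(d)\gg\sqrt{x}$. (Two small remarks: $R_K\gg\log d$ holds for \emph{all} squarefree $d$, not just Eisenstein ones, so the appeal to condition (3) of the Lemma is unnecessary; and your equation $N_3(x)=\sum_{d\in\CE}3^{r_3(\Cl(\CO_K))}$ is not an equality but only an upper bound for $N_3(x)$ --- which is fortunately the direction you need.)

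The upper bound argument, however, has a genuine gap. You claim $\pi_\CE(x)\leq N_3(x)$ by ``dropping the multiplicity,'' but this requires that every $d\in\CE$ have at least one cubic field of discriminant $4d$. By Proposition~\ref{Prop: Stevenhagen} that is \emph{false}: the multiplicity $m(4d)$ is $h^*(d)$ if the exact sequence (\ref{exact sequence}) splits and $0$ otherwise, so there are Eisenstein $d$ with $m(4d)=0$ (the paper gives $d=1901$ as an example). The underlying cause is the incorrect claim $r_3(\Cl(\CO))=r_3(\Cl(\CO_K))+1$ for all $d\in\CE$; this holds precisely when the sequence splits, and when it does not one has $r_3(\Cl(\CO))=r_3(\Cl(\CO_K))$, so the count of new cubic extensions $M/K$ inside $H_\CO$ is zero. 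Relatedly, the linear coefficient of $N_3(x)=\pi_c(x)$ is $\tfrac{1}{3\pi^2}$, not $\tfrac{1}{2\pi^2}$ (this is Theorem~\ref{BSTTT1}), so your sketch of the local density computation also cannot yield $\tfrac{1}{2\pi^2}$. The actual source of the $\tfrac{1}{2\pi^2}$ is additive: one writes $\pi_\CE=\pi_c+\Delta$, observes that $\delta(d)=1-m(4d)\leq\tfrac{h^*(d)-1}{2}$ for every $d\in\CE$ (since $m(4d)=0\Rightarrow h^*(d)\geq 3$), and then bounds $\Delta(x)\leq\sum_{d\in\CD,\,d\leq x}\tfrac{h^*(d)-1}{2}\sim\tfrac{1}{6\pi^2}x$ by Davenport--Heilbronn; adding $\tfrac{1}{3\pi^2}x$ gives $\tfrac{1}{2\pi^2}x$. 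This is the argument carried out (with the secondary terms) in the proof of Theorem~\ref{Thm: improved Stevenhagen}.
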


Recent improvements to the Davenport-Heilbronn Theorem allow us to strengthen this result slightly (Theorem \ref{Thm: improved Stevenhagen} below),
but the main purpose of the current article is to explore Stevenhagen's Conjecture numerically.

\section{Link to cubic number fields}

\subsection{Splitting and cubic discriminants}

The link between cubic fields and Eisenstein discriminants is the following \cite[Thm 3.3]{Stevenhagen1996}. 

\begin{Prop}\label{Prop: Stevenhagen}
    Suppose $d\in\CD$. If there exists a real cubic field of discriminant $4d$, then $d\in\CE$. 
    Conversely, suppose $d\in\CE$. Then the following are equivalent
    \begin{enumerate}
        \item $4d$ is the discriminant of a real cubic field;
        \item The exact sequence (\ref{exact sequence}) splits.
    \end{enumerate}
    If the above equivalent conditions hold, then there are in fact $h^*(d)$ non-isomorphic cubic fields of discriminant $4d$, where 
    $h^*(d) = \#\Cl(\CO_K)[3]$ is the order of the $3$-torsion subgroup of the class group $\Cl(\CO_K)$.
\end{Prop}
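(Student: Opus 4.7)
\emph{Strategy.} The plan is to identify the needed cubic fields with subfields of the ring class field $H_\CO$ of the order $\CO$. By class field theory, $\Gal(H_\CO/K) \cong \Cl(\CO)$ under the Artin map, and the Hilbert class field $H_K$ of $K$ corresponds to the subgroup $H_d \leq \Cl(\CO)$ from the exact sequence (\ref{exact sequence}). A key observation is that complex conjugation acts by inversion on $\Cl(\CO)$ (since $\mathfrak{a}\bar{\mathfrak{a}} = (N\mathfrak{a})$ is principal), so every subgroup is $\Gal(K/\BQ)$-stable and the induced action on any cubic quotient is by inversion; hence every degree-$3$ subfield of $H_\CO/K$ is Galois over $\BQ$ with group $S_3$ (not cyclic $C_6$), and therefore is the Galois closure of some cubic field over $\BQ$.

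\emph{Forward direction.} First I would handle the implication that a cubic field $L$ of discriminant $4d$ forces $d \in \CE$. Since $4d$ is not a square, $L/\BQ$ is not Galois and its Galois closure $\tilde L$ has $\Gal(\tilde L/\BQ) \cong S_3$ with quadratic subfield $K$. The conductor-discriminant formula gives $\mathrm{disc}(\tilde L/\BQ) = \mathrm{disc}(L)^2 \cdot \mathrm{disc}(K) = 16 d^3$, so the relative discriminant $\mathfrak{d}_{\tilde L/K}$ has norm $16$. Since $2$ is inert in $K$ with residue field $\BF_4$, cubic ramification at $\mathfrak{p}=(2)\CO_K$ is tame, and I would argue that the only consistent possibility is that $\tilde L/K$ is totally ramified at $\mathfrak{p}$ with conductor exactly $\mathfrak{p}$. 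In particular $\tilde L \subseteq H_\CO$. If $H_d$ were trivial then $H_\CO = H_K$ would be unramified over $K$, contradicting this ramification, so $d \in \CE$.

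\emph{Equivalence and counting for $d \in \CE$.} Now $|H_d|=3$. Degree-$3$ subfields $F$ of $H_\CO/K$ correspond to index-$3$ subgroups $H \leq \Cl(\CO)$, and I would show that $F \not\subseteq H_K$ (equivalently $H_d \not\subseteq H$) is exactly the condition that $F$ arises as the Galois closure of a cubic field of discriminant $4d$, by reversing the conductor-discriminant computation of the previous paragraph. Since $|H_d|=3$, the condition $H_d \not\subseteq H$ is equivalent to $H \cap H_d = \{1\}$, which with the index constraint yields $\Cl(\CO) = H \oplus H_d$, a splitting of (\ref{exact sequence}). This proves (1) $\Leftrightarrow$ (2). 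For the count, I would use a fixed splitting $\Cl(\CO) \cong \Cl(\CO_K) \oplus H_d$ to identify complements of $H_d$ with $\mathrm{Hom}(\Cl(\CO_K), H_d)$, whose cardinality is $|\Cl(\CO_K)/3\Cl(\CO_K)| = |\Cl(\CO_K)[3]| = h^*(d)$. Each complement gives a distinct subfield $F$, which is the Galois closure of a unique isomorphism class of cubic fields of discriminant $4d$ (its three cubic subfields over $\BQ$ are $S_3$-conjugate, hence isomorphic), yielding $h^*(d)$ cubic fields.

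\emph{Main obstacle.} The hardest part will be the class-field-theoretic book-keeping: verifying that $H_\CO$ is precisely the receptacle for cubic extensions of $K$ of conductor dividing $(2)$ (equivalently, that the ray class field of modulus $(2)$ coincides with the ring class field when $d \in \CE$), and pinning down the conductor of $\tilde L/K$ as exactly $\mathfrak{p}$ --- neither trivial nor a higher power. Both rely on tame ramification at the inert prime together with the discriminant bound $16 d^3$; once they are in place, the remaining translation between subfields, index-$3$ subgroups, splittings, and cubic fields is routine group theory.
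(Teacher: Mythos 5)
The paper itself offers no proof of this proposition; it is cited directly to \cite[Thm 3.3]{Stevenhagen1996}, so there is no in-text argument to compare against. Evaluating your proposal on its own terms: the class-field-theoretic route you take is sound and, as far as one can tell, is close in spirit to Stevenhagen's original. The $S_3$-structure coming from inversion on $\Cl(\CO)$, the conductor--discriminant computation $\mathrm{disc}(\tilde L/\BQ) = d\cdot(4d)^2 = 16d^3$, the dichotomy $F\subseteq H_K$ (cubic discriminant $d$) versus $F\not\subseteq H_K$ (cubic discriminant $4d$), and the count of complements of $H_d$ via $\#\mathrm{Hom}(\Cl(\CO_K),\BZ/3\BZ) = h^*(d)$ are all correct.

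One substantive correction to your ``main obstacle'': you worry that $H_\CO$ coincides with the ray class field of modulus $(2)$ only when $d\in\CE$, but in fact this holds \emph{unconditionally} for $d\equiv 5\bmod 8$ and conductor $f=2$. The ring class group is $I_{(2)}/P_{K,\BZ}$ where $P_{K,\BZ}$ consists of principal ideals with a generator $\alpha\equiv a\bmod 2\CO_K$ for some integer $a$ coprime to $2$; since $(\BZ/2\BZ)^*$ is trivial and every odd $a$ reduces to $1\in\BF_2\subset\BF_4\cong\CO_K/2\CO_K$, this is exactly the ray condition $\alpha\equiv 1\bmod 2\CO_K$. So $\Cl(\CO)\cong\Cl_{(2)}(K)$ and $H_\CO = K_{(2)}$ (wide versions, consistent with both $K$ and $\tilde L$ being totally real). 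This is not merely a convenience: if the identification held \emph{only} for $d\in\CE$, your forward direction (which deduces $d\in\CE$ from $\tilde L\subseteq H_\CO$) would be circular. Stating this explicitly closes the gap. Two smaller points worth spelling out: (i) $N_{K/\BQ}(\mathfrak{d}_{\tilde L/K})=16=2^4$ forces $\mathfrak{d}_{\tilde L/K}$ to be supported only at $\mathfrak{p}=(2)$, so there is no wild ramification anywhere (in particular not over $3$), and tameness at $\mathfrak{p}$ then forces $\mathfrak{d}_{\tilde L/K}=\mathfrak{p}^2$ with $\tilde L/K$ totally ramified of conductor $\mathfrak{p}$; and (ii) every index-$3$ subgroup $H\leq\Cl(\CO)$ not containing $H_d$ is automatically a complement, since $|H_d|=3$ forces $H\cap H_d=1$ and the index count gives $HH_d=\Cl(\CO)$, so your bijection with $\mathrm{Hom}(\Cl(\CO_K),H_d)$ is exact, not merely an estimate.
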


\subsection{Counting cubic discriminants}

Denote by
\begin{align*}
\pi_c(x) &= \#\{\text{non-isomorphic cubic fields of discriminant $4d$, where $d\in\CD$ and $d\leq x$} \} \\
& = \sum_{d\in\CD,\; d\leq x} m(4d)
\end{align*}
the counting function for the relevant cubic fields,
where $m(4d)$ denotes the multiplicity of the cubic discriminant $4d$, i.e. the number of non-isomorphic cubic fields of discriminant $4d$.
By Proposition \ref{Prop: Stevenhagen}, $\pi_c$ should have a similar, but not identical, behaviour to $\pi_\CE$. 

Happily, $\pi_c$ is quite well understood. We have \cite{BST2013, BTTT2024}

\begin{Thm}[Bhargava, Shankar, Taniguchi, Thorner, Tsimmerman]
\label{BSTTT1}
    \[
    \pi_c(x) = \sum_{d\in\CD,\; d\leq x} m(4d) = 
    \frac{1}{3\pi^2}x + C_{5/6} x^{5/6} + O(x^{2/3 + \varepsilon}),
    \]
    where
    $C_{5/6} \approx -0.03761.$
\end{Thm}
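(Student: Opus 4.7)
\medskip
\noindent\emph{Proof proposal.}
The plan is to deduce this from the Bhargava--Shankar--Tsimerman secondary-term theorem for cubic fields \cite{BST2013}, in the form with improved error term and explicit dependence on local conditions from \cite{BTTT2024}, specialised to cubic fields whose discriminant has the shape $4d$ with $d\in\CD$.

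First I would reinterpret $\pi_c(x)$ as a count of isomorphism classes of totally real cubic fields $L$ with $\mathrm{disc}(L)\leq 4x$ subject to an explicit family of local conditions $(\Sigma_p)$ on the étale algebras $L\otimes\BQ_p$: namely $v_p(\mathrm{disc}(L))\leq 1$ at every odd prime $p$ (so $L\otimes\BQ_p$ is unramified or partially ramified), and $L\otimes\BQ_2$ confined to the specific list of cubic étale $\BQ_2$-algebras forced by $d\equiv 5\bmod 8$ (these have $v_2(\mathrm{disc})=2$ and unramified quadratic resolvent $\BQ_2(\sqrt{5})$).

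Next I would invoke the BST/BTTT secondary-term formula for these local conditions:
\[
\#\{L \text{ cubic}:\,0<\mathrm{disc}(L)\leq X,\; L\otimes\BQ_p\in\Sigma_p\;\forall p\} = A_1 X + A_{5/6} X^{5/6} + O(X^{2/3+\varepsilon}),
\]
where $A_1$ and $A_{5/6}$ are Euler products of local densities, with a ``$\zeta_p(1/3)/\zeta_p(5/3)$''-type correction in the secondary factor at each prime in the form worked out by Taniguchi--Thorne. Substituting $X=4x$ then rewrites this as the claimed asymptotic for $\pi_c(x)$.

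The remaining step is to evaluate the two Euler products. At odd primes the local density summed over the unramified and partially-ramified cubic étale $\BQ_p$-algebras, combined with the Davenport--Heilbronn normalisation, contributes the factor $1/\pi^2$ reflecting the squarefree density of $d\in\CD$, together with a factor $1/3$ from the standard Davenport--Heilbronn density of cubic fields per quadratic resolvent, giving the main constant $1/(3\pi^2)$ consistent with Stevenhagen's conjecture. The main obstacle is the 2-adic contribution: one must evaluate both the primary and the more delicate secondary local density on the finite list of admissible cubic étale $\BQ_2$-algebras. It is from this 2-adic secondary computation, together with the Taniguchi--Thorne Euler product at odd primes, that the numerical value $C_{5/6}\approx -0.03761$ emerges.
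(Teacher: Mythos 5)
Your proposal is correct and follows essentially the same route as the paper: invoke the Bhargava--Shankar--Taniguchi--Thorne--Tsimerman secondary-term theorem (\cite{BTTT2024}, Theorem 1.3) for cubic fields with the appropriate local conditions at each prime, substitute $X=4x$, and evaluate the resulting Euler products for the primary and secondary coefficients. The only difference is presentational: the paper reads the local densities $C_p$ and $K_p$ directly off Table~2 of \cite{BTTT2024} (with the condition phrased as ``$2$ totally ramified, no odd prime totally ramified,'' which is equivalent to your $v_2(\mathrm{disc})=2$ with unramified resolvent and $v_p(\mathrm{disc})\le 1$ for odd $p$) and computes the Euler product in closed form to get $\frac{1}{3\pi^2}$, rather than arguing heuristically for the primary constant.
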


\begin{proof}
    We use \cite[Theorem 1.3]{BTTT2024}. In their notation, we have 
    \begin{align*}
        \pi_c(x) &= N_3^+(4x, \Sigma) \\
        &= C^+(\Sigma)\frac{1}{12\zeta(3)}4x + K^+(\Sigma)\frac{4\zeta(1/3)}{5\Gamma(2/3)^3\zeta(5/3)}(4x)^{5/6} + O_{\epsilon}(x^{2/3+\epsilon}),
    \end{align*}
    where $C^+(\Sigma) = \prod_p C_p$ and $K^+(\Sigma) = \prod_p K_p$ are computed next.
    
    The local conditions $\Sigma = (\Sigma_p)_p$ are the following. Since $d$ is square-free and $d\equiv 5 \bmod 8$, real cubic fields with discriminant $4d$ are precisely those at which $2$ is the only prime which is totally ramified, by \cite[Lemma 2.1]{Stevenhagen1996}. Thus, by the helpful Table 2 in \cite{BTTT2024}, we get
    \[
    C_2 = \frac{1}{2^2+2+1} = \frac{1}{7}
    \]
    and
    \[
    K_2 = \frac{1+2^{-1/3}}{2^2}\cdot \frac{1-2^{-1/3}}{(1-2^{-5/3})(1+2^{-1})} = \frac{1-2^{-2/3}}{6(1-2^{-5/3})}.
    \]
    At primes $p\geq 3$ we require that $p$ not be totally ramified, giving
    \[
    C_p = 1 - \frac{1}{p^2+p+1}
    \]
    and
    \[
    K_p = 1 - \frac{1+p^{-1/3}}{p^2}\cdot \frac{1-p^{-1/3}}{(1-p^{-5/3})(1+p^{-1})}.
    \]
    Now we compute
    \begin{align*}
        C_1 &= C^+(\Sigma)\frac{1}{3\zeta(3)} = \frac{1}{21} 
        \frac{\prod_p\left(1-\frac{1}{p^2+p+1}\right)}{\left(1 - \frac{1}{2^2+2+1}\right)}
        \prod_p\left(1 - \frac{1}{p^3}\right)\\
        &= \frac{1}{18}\prod_p\left(1-\frac{1}{p^2}\right) = \frac{1}{18\zeta(2)}\\
        &= \frac{1}{3\pi^2}
    \end{align*}
    and, numerically,
    \begin{align*}
        C_{5/6} &= K^*(\Sigma)\frac{4\zeta(1/3)}{5\Gamma(2/3)^3\zeta(5/3)}\cdot 4^{5/6} 
        = \frac{4^{11/6}\zeta(1/3)}{5\Gamma(2/3)^3\zeta(5/3)}\prod_p K_p\\
        &\approx -0.03761.
    \end{align*}
\end{proof}

This is an improvement over the original Davenport-Heilbronn Theorem \cite{DH2}, which only produced the linear term.

A closely related result is

\begin{Thm}[Bhargava, Shankar, Taniguchi, Thorner, Tsimmerman]
\label{BSTTT2}
    \[
    \sum_{d\in\CD,\; d\leq x}\frac{h^*(d)-1}{2} = 
    \sum_{d\in\mathcal{D},\; d\leq x} m(d) = 
    \frac{1}{6\pi^2}x + C^*_{5/6} x^{5/6} + O(x^{2/3+\varepsilon}),
    \]
    where $C^*_{5/6} \approx -0.0386.$
\end{Thm}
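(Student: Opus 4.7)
The plan is to mirror the proof of Theorem \ref{BSTTT1}, with two modifications: establishing the first (class-field-theoretic) equality, and adjusting the local condition at $p=2$ in the application of \cite[Theorem 1.3]{BTTT2024}, since the relevant cubic discriminant is now $d$ itself rather than $4d$.

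For the first equality, I would appeal to the classical bijection from class field theory. Since $d > 1$ is square-free, every cubic field $L$ with $\mathrm{disc}(L) = d$ is non-Galois of type $S_3$, with quadratic resolvent $K = \BQ(\sqrt d)$, and its Galois closure $\tilde L$ is an unramified cyclic cubic extension of $K$. Because $\Gal(K/\BQ)$ acts on $\Cl(\CO_K)$ by inversion, every index-three subgroup is fixed, so every such $\tilde L$ is in fact Galois over $\BQ$ with $\Gal(\tilde L/\BQ) \cong S_3$, and the three $\Gal(\tilde L/\BQ)$-conjugate cubic subfields form a single isomorphism class. Hence $m(d)$ equals the number of index-three subgroups of $\Cl(\CO_K)$, which is $(h^*(d)-1)/2$; summing over $d\in\CD$ gives the first equality.

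For the second equality, apply \cite[Theorem 1.3]{BTTT2024} to $N_3^+(x, \Sigma)$, where the local conditions $\Sigma = (\Sigma_p)$ encode membership in $\CD$. At odd primes $p$, $\Sigma_p$ requires ``$p$ not totally ramified'' (identical to the condition in Theorem \ref{BSTTT1}); at $p=2$, $\Sigma_2$ requires $L\otimes\BQ_2 \cong \BQ_2 \times L_2$ where $L_2$ is the unramified quadratic extension of $\BQ_2$ (splitting type ``$1+2$''), equivalently that $\mathrm{disc}(L)$ is a $2$-adic unit in the square class of $5 \bmod 8$. The odd-prime factors $C_p$, $K_p$ are thus identical to those in the proof of Theorem \ref{BSTTT1}. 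The new factors $C_2$ and $K_2$ for the $1+2$ splitting type are read from \cite[Table 2]{BTTT2024}: in particular $C_2 = 2/7$ (compared with $1/7$ for the totally ramified condition used previously).

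Plugging into the formulas of \cite[Theorem 1.3]{BTTT2024}, the telescoping identity $(1-\tfrac{1}{p^2+p+1})(1-\tfrac{1}{p^3}) = 1-\tfrac{1}{p^2}$ used in the proof of Theorem \ref{BSTTT1} again collapses the Euler product. With $C_2 = 2/7$ one obtains
\[
C^+(\Sigma)\cdot\frac{1}{12\zeta(3)} = \frac{2}{7}\cdot\frac{7\zeta(3)}{6\zeta(2)}\cdot\frac{1}{12\zeta(3)} = \frac{1}{36\zeta(2)} = \frac{1}{6\pi^2},
\]
giving the leading constant. The secondary constant $C^*_{5/6}$ is then a direct numerical evaluation of $\prod_p K_p$ with the $K_2$ corresponding to splitting type $1+2$. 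The main obstacle is verifying that the square-class condition ``$\mathrm{disc} \equiv 5 \bmod 8$'' at $p=2$ selects exactly the $1+2$ splitting type, and that the corresponding entries of \cite[Table 2]{BTTT2024} yield $C_2 = 2/7$ together with the appropriate $K_2$; everything else then follows mechanically from the template of the proof of Theorem \ref{BSTTT1}.
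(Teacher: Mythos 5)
Your proposal matches the paper's proof: the first identity is the classical Hasse correspondence (which the paper simply cites to \cite[Section 8.1]{BST2013}), and the asymptotic follows by applying \cite[Theorem 1.3]{BTTT2024} with the $1{+}2$ splitting condition at $p=2$ and the not-totally-ramified condition at odd primes, collapsing the Euler product exactly as in Theorem \ref{BSTTT1}. The one step you flag as needing verification --- that $\mathrm{disc}(L)\equiv 5\pmod 8$ for square-free cubic discriminant is equivalent to $2$ being partially split --- is precisely what the paper establishes as Lemma \ref{partial split}.
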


\begin{proof}
    The first identity follows as the number of cubic fields of fundamental discriminant $d$ equals $(h^*(d)-1)/2$, see \cite[Section 8.1]{BST2013}. The counting function for such fields is again given in  \cite[Theorem 1.3]{BTTT2024}. This time, the relevant local condition is that $2$ is partially split and the remaining primes are not totally ramified, by Lemma \ref{partial split} below. The coefficients are computed using Table 2 in \cite{BTTT2024} as in the proof of Theorem \ref{BSTTT1}.
\end{proof}

\begin{Lem}\label{partial split}
    Let $F/\BQ$ be a totally real cubic field with square-free discriminant $d$. Then $2$ is partially split in $F/\BQ$ if and only if $d\equiv 5 \bmod 8$.
\end{Lem}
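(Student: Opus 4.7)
The plan is to reduce the question to a standard Frobenius calculation in the Galois closure $\tilde F$ of $F/\BQ$, combined with the classification of the splitting of $2$ in a real quadratic field.

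First, I would observe that the discriminant of any number field is congruent to $0$ or $1\bmod 4$, so since $d$ is squarefree we must have $d\equiv 1\bmod 4$; in particular $d$ is odd and hence $2$ is unramified in $F$. A second consequence of $d$ being squarefree (and $F\neq\BQ$) is that $d$ is not a perfect square, so $F$ is not Galois over $\BQ$ and $\Gal(\tilde F/\BQ)\cong S_3$, with $K=\BQ(\sqrt d)$ the unique quadratic subfield of $\tilde F$.

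Next I would invoke the standard dictionary between the conjugacy class of the Frobenius $\sigma_2$ at $2$ in $S_3$ (acting on the three embeddings of $F$ into $\tilde F$) and the splitting of $2$ in $F$: $\sigma_2=\mathrm{id}$ corresponds to $2$ being totally split, $\sigma_2$ a transposition to $2$ being partially split, and $\sigma_2$ a $3$-cycle to $2$ being inert. At the same time, viewing $K$ as the fixed field of $A_3\subset S_3$, the prime $2$ splits in $K$ iff $\sigma_2\in A_3$ and is inert iff $\sigma_2\in S_3\setminus A_3$. Combining these two dictionaries, $2$ is partially split in $F$ if and only if $2$ is inert in $K$.

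Finally, I would finish with the elementary fact that for $d\equiv 1\bmod 4$ squarefree, $2$ is inert in $\BQ(\sqrt d)$ if and only if $d\equiv 5\bmod 8$ (splitting iff $d\equiv 1\bmod 8$). The only step that requires any real thought is the initial reduction, namely that $d\equiv 1\bmod 4$ forces $2$ to be unramified in $F$ and pins $K=\BQ(\sqrt d)$ down as the resolvent quadratic of $\tilde F$; once this is in place the rest is routine bookkeeping with the Frobenius element.
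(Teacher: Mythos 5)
Your proof is correct and follows essentially the same route as the paper: pass to the Galois closure with group $S_3$, observe that $2$ is unramified, and reduce the splitting type of $2$ in $F$ to the splitting of $2$ in the quadratic resolvent $\BQ(\sqrt d)$. The paper phrases this in terms of decomposition groups and residue degrees of primes of $\tilde F$ rather than explicitly invoking the Frobenius cycle-type dictionary, but the underlying argument is identical.
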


\begin{proof}
    First, $2$ is unramified if and only if $d$ is odd.
    It is known (see e.g. \cite{Hasse1930}) that if an odd cubic discriminant $d$ is square-free then $d \equiv 1 \bmod 4$ and the Galois closure of $F$ is $L=F(\sqrt{d})$ and has Galois group $\Gal(L/\BQ)\cong S_3$. 
    Since this is not cyclic, $2$ cannot remain inert in $L$, therefore 2 splits into either 2, 3 or 6 primes $\fp$ in $L/\BQ$ of residue degrees $f(\fp, L/\BQ) = 3, 2$ or $1$, respectively.

    If the residue degrees are $1$ or $3$, then $2$ splits in the quadratic extension $\BQ(\sqrt{d})/\BQ$, hence $d \equiv 1 \bmod 8$.

    Therefore, $d\equiv 5 \bmod 8$ if and only if 2 splits into three primes of residue degree 2 in $L/\BQ$. It remains to show that in this case $2$ is partially split in $F/\BQ$.

    The decomposition groups $D(\fp, L/\BQ)$ have order 2, and only one of them can equal $\Gal(L/F)$. Let $\fp$ be one of the two primes above 2 for which $D(\fp, L/\BQ)=\{1, \sigma\} \neq \Gal(L/F)$. Then $\sigma(\fp\cap\CO_F) \neq \fp\cap\CO_F$ and so $f(\fp, L/F) = 1$ and hence $f(\fp\cap\CO_F, F/\BQ)=2$. It follows that 2 is partially split in $F/\BQ$.
\end{proof}

\begin{Rem}
    As a sanity check, we computed cubic discriminants  up to $10^8$ using Pari/GP \cite{PARI2} and found that the results closely matched Theorems \ref{BSTTT1} and \ref{BSTTT2}. In particular, the given approximate values of the coefficients $C_{5/6}$ and $C^*_{5/6}$ appear to be correct.
\end{Rem}



\subsection{A random walk?}

Let us define the difference 
\[
\Delta(x) = \pi_\CE(x) - \pi_c(x), \qquad x\in\BR.
\]

Then we have
\[
\Delta(x) = \sum_{d\in\CE, \; d \leq x} \delta(d),
\]
where, for each $d\in\CE$
\begin{align*}
\delta(d) & = 1 - m(4d) \\
& = \left\{\begin{array}{ll}
    1 & \text{if (\ref{exact sequence}) does not split} \\
    1-h^*(d) & \text{if (\ref{exact sequence}) splits.} 
\end{array}\right. 
\end{align*}

If $h^*(d)=1$, then (\ref{exact sequence}) splits and $\delta(d)=0$. The more interesting cases are when $h^*(d) \geq 3$.

\begin{Ex}
    When $d=1901$, we find $d\in\CE$, $h^*(d)=3$ and (\ref{exact sequence}) does not split, so $m(4d)=0$ and $\delta(1901) = 1$. Here, there are no cubic fields to detect $1901\in\CE$ and $\pi_c$ undercounts $\pi_\CE$ by~1.

    When $d=7053$, we find that $d\in\CE$, $h^*(d)=3$ and (\ref{exact sequence}) splits. In this case, $m(4d)=3$ and $\delta(7053)=-2$. 
    Here, there are three cubic fields corresponding to $7053\in\CE$, so $\pi_c$ overcounts $\pi_\CE$ by~2.
\end{Ex}

When exactly does (\ref{exact sequence}) split? 
Up to equivalence, all possible extensions of the group $\Cl(\CO_K)$ by $H_d\cong\BZ/3\BZ$ are in bijection with the first extension group 
$\Ext^1(\Cl(\CO_K), H_d)$, which we can calculate (e.g.  \cite[Chapter 3]{Weibel94}),  
\[
\Ext^1(\Cl(\CO_K), \BZ/3\BZ) \cong \Cl(\CO_K)[3].
\]
The sequence (\ref{exact sequence}) splits if and only if it corresponds to the identity element in $\Ext^1(\Cl(\CO_K), H_d)$.

A priori, we might expect the exact sequence (\ref{exact sequence}) to correspond to a randomly chosen element of $\Ext^1(\Cl(\CO_K), H_d)\cong \Cl(\CO_K)[3]$, suggesting:

\begin{Heur}\label{splitting}
    For any $d\in\CE$, the probability that (\ref{exact sequence}) splits is $1/h^*(d)$.
\end{Heur}

Given our heuristic, we might model $\Delta(x)$ as a random walk, which at each step either increases by 1 with probability $(h^*(d)-1)/h^*(d)$ or decreases by $h^*(d) - 1$ with probability $1/h^*(d)$. The expected value of each step is thus 0, and the expected variance would be something like $O(x^{1/2 + \varepsilon})$.

Note that, for most $d\in\CD$, $h^*(d)=1$ in which case $\delta(d)=0$. The Cohen-Lenstra-Martinet Heuristics \cite{CL84} predict how often each value of $h^*(d)$ occurs. By Theorem \ref{BSTTT2}, the average value of $h^*(d)$ is $4/3$ and this is notably one of the only cases of the Cohen-Lenstra-Martinet Heuristics that has been proved.








\subsection{Unconditional results}

Before presenting our numerical results, we obtain some upper and lower bounds on $\Delta(x)$, which allow for slight improvements in Stevenhagen's Theorem \ref{Stevenhagen1}.

\begin{Thm}\label{Thm: improved Stevenhagen}
    We have the following asymptotic upper and lower bounds on $\pi_{\CE}(x)$.
    \begin{enumerate}
        \item $\displaystyle \pi_\CE(x) \geq (C_{5/6} - 2C^*_{5/6})x^{5/6} + O(x^{2/3 + \varepsilon})$. 

        \item $\displaystyle \pi_\CE(x) \leq  \frac{1}{2\pi^2}x + \left(C_{5/6} + C^*_{5/6}\right)x^{5/6} + O(x^{2/3 + \varepsilon}).$
        
    \end{enumerate}
\end{Thm}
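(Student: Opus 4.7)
The plan is to partition $\CE \cap [1,x]$ into three classes according to the value of $h^*(d)$ and whether the exact sequence (\ref{exact sequence}) splits, and then to derive an exact identity linking $\pi_\CE(x)-\pi_c(x)$ to quantities controlled by Theorems \ref{BSTTT1} and \ref{BSTTT2}. Both bounds then follow by relaxing this single identity in opposite directions.

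Concretely, write
\[
\CE_1 = \{d \in \CE \cap [1,x] : h^*(d) = 1\},
\]
\[
\CE_2 = \{d \in \CE \cap [1,x] : h^*(d) \geq 3 \text{ and (\ref{exact sequence}) splits}\},
\]
\[
\CE_3 = \{d \in \CE \cap [1,x] : h^*(d) \geq 3 \text{ and (\ref{exact sequence}) does not split}\}.
\]
By Proposition \ref{Prop: Stevenhagen}, $m(4d)$ equals $1$, $h^*(d)$, $0$ on $\CE_1, \CE_2, \CE_3$ respectively, while $m(d) = (h^*(d)-1)/2$ vanishes on $\CE_1$ and is at least $1$ on $\CE_2 \cup \CE_3$. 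Using $h^*(d) = 2m(d)+1$ on $\CE_2$,
\[
\pi_c(x) = |\CE_1| + \sum_{d \in \CE_2} h^*(d) = |\CE_1| + |\CE_2| + 2\sum_{d \in \CE_2} m(d),
\]
and subtracting from $\pi_\CE(x) = |\CE_1| + |\CE_2| + |\CE_3|$ yields the key identity
\[
\pi_\CE(x) - \pi_c(x) = |\CE_3| - 2\sum_{d \in \CE_2} m(d). \qquad (\star)
\]

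For part~(1), I discard the nonnegative term $|\CE_3|$ in $(\star)$ and enlarge $\sum_{\CE_2} m(d)$ to the full sum $\sum_{d \in \CD,\ d \leq x} m(d)$. Substituting the asymptotics of Theorems \ref{BSTTT1} and \ref{BSTTT2}, the linear contributions $\tfrac{1}{3\pi^2}x$ and $2 \cdot \tfrac{1}{6\pi^2}x$ cancel exactly, leaving $\pi_\CE(x) \geq (C_{5/6} - 2C^*_{5/6})x^{5/6} + O(x^{2/3 + \varepsilon})$. For part~(2), I reverse the roles: drop the nonnegative $2\sum_{\CE_2} m(d)$ in $(\star)$ and bound $|\CE_3| \leq \sum_{d \in \CE_3} m(d) \leq \sum_{d \in \CD,\ d \leq x} m(d)$, where the first inequality uses $m(d) \geq 1$ on $\CE_3$. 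Theorem \ref{BSTTT2} then produces the stated asymptotic upper bound.

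No step is analytically difficult: the whole argument is algebraic bookkeeping around $(\star)$ together with Theorems \ref{BSTTT1} and \ref{BSTTT2}. The conceptual content is the case split into $\CE_1, \CE_2, \CE_3$, together with the observation that $|\CE_3|$ and $\sum_{\CE_2} m(d)$ enter $(\star)$ with opposite signs; this is precisely what allows a single identity to be relaxed in opposite directions to produce both a lower and an upper bound for $\pi_\CE(x)$.
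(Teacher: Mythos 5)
Your trichotomy $\CE_1,\CE_2,\CE_3$ and the resulting identity $(\star)$, $\pi_\CE(x)-\pi_c(x)=|\CE_3|-2\sum_{\CE_2}m(d)$, are correct, and both bounds then follow by the same two relaxations as in the paper: the paper works pointwise with $\delta(d)\geq 1-h^*(d)$ and $\delta(d)\leq (h^*(d)-1)/2$ and extends the sum from $\CE$ to $\CD$, which is exactly what you get from $(\star)$ after dropping one of the two terms. So the lower bound matches the paper's proof exactly.

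There is, however, one point you should not gloss over. In part~(2) you assert that the chain ``produces the stated asymptotic upper bound'' without actually carrying out the final addition. What your inequalities give is
\[
\pi_\CE(x) \;\leq\; \pi_c(x) + \sum_{d\in\CD,\;d\leq x} m(d) \;=\; \frac{1}{2\pi^2}x + \bigl(C_{5/6}+C^*_{5/6}\bigr)x^{5/6} + O(x^{2/3+\varepsilon}),
\]
which is \emph{not} the displayed claim $\pi_\CE(x)\leq \frac{1}{6\pi^2}x + C^*_{5/6}x^{5/6} + O(x^{2/3+\varepsilon})$. The paper's own proof ends with the same $\frac{1}{2\pi^2}x + (C_{5/6}+C^*_{5/6})x^{5/6}$ bound, so the mismatch appears to be a typo in the theorem statement rather than an error in your argument; indeed nothing in this method controls $|\CE_1|$, the count of Eisenstein discriminants with $h^*(d)=1$, which one would need to reach a $\frac{1}{6\pi^2}x$ main term. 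You should state explicitly the bound your argument actually proves, namely $\frac{1}{2\pi^2}x + (C_{5/6}+C^*_{5/6})x^{5/6} + O(x^{2/3+\varepsilon})$, rather than claim agreement with the displayed statement.
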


Since $C_{5/6} - 2C^*_{5/6} \approx 0.0396 > 0$ and $C_{5/6}+C^*_{5/6} \approx -0.0761 < 0$, this is an improvement on the lower and upper bounds in Theorem~\ref{Stevenhagen1}.

\begin{proof}

First,
\begin{align*}
    \Delta(x) & \geq \sum_{d\in\CE, \; d \leq x}(1-h^*(d)) 
    \geq \sum_{d\in\CD, \; d \leq x}(1-h^*(d)) \\
    & = -\frac{1}{3\pi^2}x - 2C^*_{5/6}x^{5/6} + O(x^{2/3+\varepsilon}) \\
    \Rightarrow \; \pi_\CE(x) & = \pi_c(x) + \Delta(x) \\
    & \geq (C_{5/6} - 2C^*_{5/6})x^{5/6} + O(x^{2/3 + \varepsilon}).
\end{align*}

For an upper bound on $\Delta(x)$, note that $\delta(d)=1 \Rightarrow h^*(d) \geq 3$, so 
\begin{align*}
\Delta(x) &\leq \sum_{d\in\CE,\; d\leq x} \frac{h^*(d)-1}{2} \\
& \leq \sum_{d\in\CD,\; d\leq x} \frac{h^*(d)-1}{2} \\
& = \frac{1}{6\pi^2}x + C^*_{5/6}x^{5/6} + O(x^{2/3 + \varepsilon}),
\end{align*}
which gives us
\[
\pi_{\CE}(x) \leq \pi_c(x) + \Delta(x) = 
\frac{1}{2\pi^2}x + \left(C_{5/6} + C^*_{5/6}\right)x^{5/6} + O(x^{2/3 + \varepsilon}).
\]

\end{proof}

\section{Numerical results}

\subsection{Eisenstein discriminants and the splitting of (\ref{exact sequence})}

We computed $\pi_\CE(x)$ for $x\leq 10^{11}$ by adapting Shanks's infrastructure method to compute $\varepsilon_d \bmod 2\CO_K$. The details of the implementation, which runs on a GPU, are described in the Appendix.

\medskip

\begin{mdframed}
We find that $\pi_\CE(x)$ is closely approximated by
\begin{equation}
    \label{eq:main result}
    \pi_{\CE}(x) \approx \frac{1}{3\pi^2}x - 0.024 x^{5/6}.
\end{equation}
\end{mdframed}

Note that the coefficient of $x^{5/6}$ differs from $C_{5/6}$ in Theorem \ref{BSTTT1}, and consequently

\begin{equation}
    \label{eq:main resul 2}
    \Delta(x) \approx (-0.024 - C_{5/6})x^{5/6} \approx 0.013\, x^{5/6}.
\end{equation}

In particular, the expectation that $\Delta(x)$ behaves like an unbiased random walk does not bear out.
This means that the exact sequence (\ref{exact sequence}) splits slightly less often than predicted by Heuristic~\ref{splitting}, although the splitting probability should tend to $1/h^*(d)$ asymptotically since $\Delta(x)$ appears to lack a linear term.


One might expect the splitting of (\ref{exact sequence}) to depend on the $3$-rank of $\Cl(\CO_K)$. We computed $\Cl(\CO_K)$ and $m(4d)$ for $d\in \CE\cap I$ for certain subintervals $I$ of length $10^8$ in $[0,10^{11}]$ using Pari/GP; the results are shown in Table \ref{table split by hd}.


\begin{table}
{\small 
    \centering
\begin{tabular}{|r||r|rr||r|rr|}\hline
     $\CE\cap I$
     & \multicolumn{3}{|c||}
     {$\CE\cap [0, 10^8]$}
     &  \multicolumn{3}{|c|}{$\CE\cap [10^{9}-10^8,10^{9}]$}\\ \hline
     $h^*(d)$ & Total & Split & & Total & Split &  \\ \hline
     1 
     & 2 790 560 & 2 790 560 & (100\%)
     & 2 810 693  & 2 810 693 & (100\%) \\
     3 
     & 464 866 & 136 878 & (29.44\%)
     & 494 628 & 154 116 & (31.16\%) \\ 
     9 
     & 4 241 & 205 & (4.83\%)
     & 5 512 & 404 & (7.33\%) \\
     27 
     & 1 & 0 & (0.00\%)
     & 2 & 1 & (50.00\%) \\ \hline
\end{tabular}

\medskip

\begin{tabular}{|r||r|rr||r|rr|}\hline
    $\CE\cap I$
    & \multicolumn{3}{|c|}{$\CE\cap [10^{10}-10^8,10^{10}]$} 
    & \multicolumn{3}{|c|}{$\CE\cap [10^{11}-10^8,10^{11}]$}
    \\ \hline
    $h^*(d)$ & Total & Split & 
    & Total & Split &
    \\ \hline
    1 & 2 821 015 & 2 821 015 & (100\%) 
    & 2 824 295 & 2 824 295 & (100\%) \\
    3 & 506 849 & 161 395 & (31.84\%) 
    & 514 475 & 166 316 & (32.33\%) \\ 
    9 &  6 359 & 553 & (8.70\%) 
    & 6 729 & 641 & (9.53\%)\\
    27 & 4 & 0 & (0.00\%) 
    & 4 & 0 & (0.00\%)\\ \hline
\end{tabular}
    \caption{Splitting of (\ref{exact sequence}) for Eisenstein discriminants in the given intervals}
    \label{table split by hd}
    }
\end{table}

We see that when $h^*(d) = 3$, the sequence splits slightly less than $1/3$ of the time, with a larger shortfall (4.83\%, 7.33\%, 8.70\% and 9.53\%, respectively, instead of 11.11\%) when $h^*(d) = 9$. There were too few instances of $h^*(d)=27$ to draw any conclusions. We see that the observed proportions approach the expected proportions as $d$ increases.


An interpretation of (\ref{eq:main result}) is that the  ``probability'' 
that a given $d\in\CD$ also lies in $\CE$ is approximately 
\begin{equation}\label{eq:prob E|D}
P(d\in\CE | d\in\CD) \approx \frac{1}{3} - \frac{0.201}{d^{1/6}},
\end{equation}
for then
\begin{align*}
\pi_\CE(x) & = \int_5^x P(\floor{t}\in\CE)dt 
= \int_5^x P(\lfloor t\rfloor \in\CE|\lfloor t\rfloor\in\CD)P(\lfloor t\rfloor\in\CD)dt \\
& \approx \int_5^x\left(\frac{1}{3} - \frac{0.201}{t^{1/6}}\right)\cdot\frac{1}{\pi^2} dt
\approx \frac{1}{3\pi^2}x - 0.024 x^{5/6}.
\end{align*}

We remark that the data in Table \ref{table split by hd} is consistent with
\[
P(\text{(\ref{exact sequence}) splits} \;|\; d\in\CE\wedge h^*(d)=3) \approx \frac{1}{3} - \frac{0.68}{d^{1/6}}
\]
and
\[
P(\text{(\ref{exact sequence}) splits} \;|\; d\in\CE\wedge h^*(d)=9) \approx \frac{1}{9} - \frac{1.08}{d^{1/6}}.
\]







\subsection{Prime subsequence}

The distribution of prime numbers in $\CE$ was investigated in \cite{BSC}. If we denote by $\CP$ the set of prime numbers and 
\[
\pi_{\CE\cap\CP}(x) = \sum_{p\in\CE\cap\CP, \; p\leq x}1,
\]
then computations in  \cite{BSC} for $x \leq 10^{11}$  show that
\begin{equation}\label{eq:pdata}
   \pi_{\CE\cap\CP}(x) \approx \frac{1}{12}\pi(x) - 0.037\int_2^x\frac{dt}{t^{1/6}\log t}. 
\end{equation}

Again, if we write
\begin{align*}
    \pi_{\CE\cap\CP}(x) & = \int_2^x P(\floor{t}\in\CE|\floor{t}\in\CD\cap\CP)P(\floor{t}\in\CD\cap\CP)dt \\
    & = \int_2^x P(\floor{t}\in\CE|\floor{t}\in\CD\cap\CP)\cdot \frac{1}{4\log t} dt,
\end{align*}
then we conclude that the ``probability'' that a prime $p\in\CD$ is Eisenstein is
\begin{equation}\label{eq: prob E|PD}
    P(p\in\CE|p\in\CD\cap\CP) \approx \frac{1}{3} - \frac{0.148}{p^{1/6}}.
\end{equation}

So primes $p\in\CD$ are slightly more likely to be Eisenstein than are composite $d\in\CD$.

\subsection{Discussion}

These results are quite surprising. The distributions of cubic discrimants and of  Eisenstein discriminants are weakly related by Proposition \ref{Prop: Stevenhagen}, but this relation is not enough to force $\pi_c(x)$ and $\pi_\CE(x)$ to have the same linear term, hence the relatively weak  Theorem \ref{Thm: improved Stevenhagen}.

The numerical results suggest that they do indeed have the same linear term and this is what we expect heuristically. Mysteriously, both counting functions also have $x^{5/6}$ terms, although this time the coefficients differ. Can there really be two different ``mechanisms'' producing $x^{5/6}$ error terms? Finally, the case of primes in $\CE$ again has a similar error term, but with a different coefficient. 
Clearly, more remains to be discovered.


\appendix
\section{}
The following material includes summarised parts of \cite{Punch2024} (the second author's Honours thesis).

For each value of $d\in\CD$, we describe how to compute $\varepsilon_d\bmod 2\CO_K$. Our technique has been adapted slightly from \cite[Section 7.4]{JacobsonWilliams}, which computes the (non-reduced) fundamental unit. Our strategy is to reduce intermediate values of $\theta$ (the generators for principal ideals in the cycle of reduced ideals) modulo $2\CO_K$ at each step. The original infrastructure technique is summarised below.

Given a principal $\CO_K$-ideal $I=(\theta_{j+1})$ expressed as the $\BZ$-module $[\frac{Q_j}{2},\frac{P_j+\sqrt{d}}{2}]$, we define $\rho(I)$ to be the ideal $(\theta_{j+2})$, which is the $\BZ$-module $[\frac{Q_{j+1}}{2},\frac{P_{j+1}+\sqrt{d}}{2}]$, where $$q_j=\floor{\frac{P_j+\sqrt{d}}{Q_j}},P_{j+1}=q_j Q_j-P_j, Q_{j+1}=\frac{d-P_{j+1}^2}{Q_j}, \text{ and }\theta_{j+2}=\frac{P_{j+1}+\sqrt{d}}{Q_j}\theta_{j+1}.$$ The action of $\rho$ is usually called a \say{reduction}, because applying it to a primitive ideal sufficiently many times will yield a reduced primitive ideal. A binary operation that yields an ideal equivalent to the product of two input ideals will be denoted by $*$ (note that there are several operations that do this). Infrastructure is a baby-step giant-step technique (due to Shanks, which he describes in \cite{shanks1969}), with $\rho$ and $*$ as baby steps and giant steps, respectively. The reader should refer to Algorithm \ref{alg:Infra} for the full description.

\begin{algorithm}
\caption{Infrastructure}\label{alg:Infra}
\begin{algorithmic}
\State \textbf{Input:} a value $d\in\CD$.
\State \textbf{Output:} the fundamental unit $\varepsilon$ in $\BQ(\sqrt{d})$.
\State Put $j\gets 1,\theta_1\gets 1$. \Comment{$(\theta_1)$ can be thought of as the $\BZ$-module $[\frac{2}{2},\frac{1+\sqrt{d}}{2}]$.}
\State Initialize a dictionary of ideals, and add the key-value pair $(Q_0=2,P_0=1):\theta_1=1$.
\While{$\log\theta_{j}<{\sqrt[^4]{d}}$}
    \State Compute $(\theta_{j+1})=\rho((\theta_{j}))$ and its corresponding $\BZ$-module $[\frac{Q_{j}}{2},\frac{P_{j}+\sqrt{d}}{2}]$.
    \State Add $(Q_j,P_j):\theta_{j+1}$ to the dictionary.
    \If{$Q_j=Q_{j-1}$ or $P_j=P_{j-1}$}
        \State Compute $\varepsilon$ using \cite[Theorem 3.13]{JacobsonWilliams}.
        \State \Return $\varepsilon$.
    \EndIf
    \State $j \gets j+1$.
\EndWhile
\State Put $(\mu_1)\gets(\theta_j),k\gets 2$.
\State Compute two more ideals, $(\theta_{j+1})$ and $(\theta_{j+2})$, and their corresponding $\BZ$-modules.
\State Add $(Q_j,P_j):\theta_{j+1}$ and $(Q_{j+1},P_{j+1}):\theta_{j+2}$ to the dictionary.
\While{true}
    \State Put $(\mu_k)\gets(\mu_1)*(\mu_{k-1}')$.
    \State Apply $\rho$ to $(\mu_k)$ until it is a reduced ideal, say $(\mu_k')$, and let $q_k$ and $p_k$ be the values of $Q$ and $P$ when we write $(\mu_k')$ as a $\BZ$-module.
    \If{$(q_k,p_k)$ is in the dictionary}
        \State Set $\theta$ to be the value corresponding to the key $(q_k,p_k)$.
        \State Put $\varepsilon=\frac{\mu_k'}{\theta}$.
        \State \Return $\varepsilon$.
    \EndIf
    \State $k\gets k+1$.
\EndWhile

\end{algorithmic}
\end{algorithm}

Now, we describe how to modify this procedure. During the baby-step stage of the algorithm, we just reduce each generator modulo $2\CO_K$. The giant step involves composition and reduction on two principal ideals, say $\mathfrak{b}_1=(\theta')$ and $\mathfrak{b}_2=(\theta'')$. The output is a reduced principal ideal $\mathfrak{b}=(\theta)$ and a factor $\frac{1}{\gamma}$ such that $\mathfrak{b}=\frac{1}{\gamma}\mathfrak{b}_1\mathfrak{b}_2=(\frac{1}{\gamma}\theta'\theta'')$. We find $\theta\pmod{2\CO_K}$ by finding $\frac{1}{\gamma}\times\theta'\times\theta''\pmod{2\CO_K}$, so we need some value for $\frac{1}{\gamma}\pmod{2\CO_K}$. While we are guaranteed that $\theta',\theta'',\theta\in\CO_K$, we know only that $\frac{1}{\gamma}\in K$.

However, we can fix this in the following way. In the cycle of reduced principal ideals $\{ \mathfrak{a}_1=(1),\mathfrak{a}_2,\mathfrak{a}_3,\dots\}$, we have that $\theta_j=\frac{G_{j-2}+B_{j-2}\sqrt{d}}{2}$ is a generator for $\mathfrak{a}_j$ (see \cite[Section 3.1]{JacobsonWilliams} for the definitions of $G_j$, $A_j$ and $B_j$).
\begin{Thm} \label{theoremthetanotin2OK}
    For every $j\geq 1$, $\theta_j$ is not in $2\CO_K$.
\end{Thm}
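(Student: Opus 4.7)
The plan is to exploit the ideal-theoretic content of the statement, converting it from a question about an element $\theta_j$ to one about the ideal $\mathfrak{a}_j = (\theta_j)$. The crucial input is that $\mathfrak{a}_j$ is a reduced, hence primitive, ideal in the infrastructure cycle, together with the fact that $2\CO_K$ is prime (since $2$ is inert in $K/\BQ$ for $d \equiv 5 \bmod 8$). If one shows $\mathfrak{a}_j \not\subseteq 2\CO_K$, then $\theta_j \not\in 2\CO_K$ follows immediately: were $\theta_j \in 2\CO_K$, then $(\theta_j) = \theta_j\CO_K \subseteq 2\CO_K$ since $2\CO_K$ is an ideal.

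The concrete step I would carry out is to exhibit an element of $\mathfrak{a}_j$ not lying in $2\CO_K$. Writing $\omega = \frac{1+\sqrt{d}}{2}$ so that $\CO_K = \BZ[\omega]$ and $2\CO_K = 2\BZ + 2\BZ\omega$, the basis element $\frac{P_{j-1}+\sqrt{d}}{2}$ of $\mathfrak{a}_j$ becomes $\frac{P_{j-1}-1}{2} + \omega$. Its $\omega$-coefficient is $1$, which is odd, so the element is visibly not in $2\CO_K = 2\BZ \oplus 2\BZ\omega$. This requires knowing that $P_{j-1}$ is odd, which follows by an easy induction from $P_0 = 1$, $Q_0 = 2$, the recursion $P_{j+1} = q_j Q_j - P_j$, and the fact that all $Q_j$ remain even (using $d \equiv 5 \bmod 8$ and the recursion $Q_{j+1} = (d - P_{j+1}^2)/Q_j$, together with $d - P_{j+1}^2 \equiv 4 \bmod 8$ for odd $P_{j+1}$).

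The main obstacle, such as it is, lies in ensuring the parity assertions on the $P_j$ and $Q_j$ remain intact along the infrastructure cycle; this is essentially classical and follows either from Jacobson--Williams directly or from the short induction sketched above. Once that is in place, the rest is a one-line reduction: $\mathfrak{a}_j$ contains an element outside $2\CO_K$, so $\mathfrak{a}_j \not\subseteq 2\CO_K$, and therefore $\theta_j \not\in 2\CO_K$, as desired. A slightly more conceptual but essentially equivalent phrasing would be that reduced ideals are primitive and hence not divisible by any rational integer greater than $1$, so in particular not by the ideal $(2) = 2\CO_K$.
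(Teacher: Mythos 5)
Your proposal is correct, and it takes a genuinely different route from the paper's own proof. The paper works directly with the coordinates of the generator, writing $\theta_j=\frac{G_{j-2}+B_{j-2}\sqrt{d}}{2}$ for $j\geq 2$ and invoking two specific facts from Jacobson--Williams: that $\gcd(G_{j-2},B_{j-2})\in\{1,2\}$, and that $\gcd(A_{j-2},B_{j-2})=1$. It then splits into cases on this gcd and checks, coordinate by coordinate, that $\theta_j\notin 2\CO_K$. You instead argue through the ideal $\mathfrak{a}_j=(\theta_j)$: since $2\CO_K$ is an ideal, $\theta_j\in 2\CO_K$ would force $\mathfrak{a}_j\subseteq 2\CO_K$, and you rule this out either conceptually (reduced ideals are primitive, hence not divisible by $(2)$) or concretely (the $\BZ$-basis element $\frac{P_{j-1}+\sqrt{d}}{2}=\frac{P_{j-1}-1}{2}+\omega$ of $\mathfrak{a}_j$ lies outside $2\CO_K$ because its $\omega$-coefficient is $1$). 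Your approach buys a cleaner conceptual statement -- primitivity of reduced ideals is a standard fact and does all the work -- and it avoids the specific gcd facts about $G_j, A_j, B_j$ that the paper's proof relies on. One small remark on the parity induction you sketch: what you actually need and get is $Q_j\equiv 2\pmod 4$ (not merely $Q_j$ even), since $Q_jQ_{j+1}=d-P_{j+1}^2\equiv 4\pmod 8$ forces $v_2(Q_{j+1})=1$ once $v_2(Q_j)=1$; in fact the parity of $P_j$ and the evenness of $Q_j$ are automatic once you know $\mathfrak{a}_j=[\frac{Q_{j-1}}{2},\frac{P_{j-1}+\sqrt{d}}{2}]$ is an integral $\CO_K$-ideal, so the induction can be bypassed entirely. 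The paper's proof is a touch more explicit about the representation of $\theta_j$ itself, which aligns with the fact that the algorithm needs $\theta_j\bmod 2\CO_K$, but your route is equally valid and, as a standalone proof, more economical.
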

\emph{Proof.}
\begin{itemize}
    \item When $j=1$, $\theta_j=1$.
    \item When $j=2$, $\theta_j=\frac{G_0+B_0\sqrt{d}}{2}$, which is not in $2\CO_K$ because $B_0=1$.
    \item When $j\geq 3$, first observe that, due to \cite[Equation 3.4]{JacobsonWilliams}, $\gcd(G_{j-2},B_{j-2})$ is either 1 or 2. If it is 1, then $\theta_j=\frac{G_{j-2}+B_{j-2}\sqrt{d}}{2}$ must be outside $2\CO_K$. If it is 2, we need to show that $G_{j-2}-B_{j-2}$ (which equals $2A_{j-2}-2B_{j-2}$ by definition of $G_{j-2}$) is $2\pmod{4}$.
    $B_{j-2}$ is even by assumption, and $A_{j-2}$ is coprime to it by \cite[Equation 3.9]{JacobsonWilliams} (and therefore odd), so we are done. \qed
\end{itemize}

It follows that $\theta',\theta'',\theta\in \CO_K \backslash 2\CO_K$. Now, let $\CO_{K,2}$ be the localisation of $\CO_K$ at the prime ideal $2\CO_K$. We know that $\frac{1}{\gamma}$ is $\frac{\theta}{\theta'\theta''}$, and that neither the numerator nor denominator is in $2\CO_K$ (by Theorem \ref{theoremthetanotin2OK} and primality of $2\CO_K$). It follows that $\frac{1}{\gamma}\in \CO_{K,2}$. Putting $B=\CO_K/2\CO_K$ with the canonical reduction map $\CO_K\rightarrow B$, the universal property provides a unique way of extending this map to all of $\CO_{K,2}$. Then, if $\frac{1}{\gamma}=\frac{\alpha}{\beta}$ for some $\alpha\in \CO_K,\beta\in\CO_K\backslash 2\CO_K$, it reduces to $(\alpha+2\CO_K)(\beta+2\CO_K)^{-1}$.

Finally, since we only ever multiply or divide generators, it is sufficient to work in  $(\CO_K/2\CO_K)^*\cong(\BF_4)^*\cong \BZ_3$. Instead of multiplication in $\CO_K/2\CO_K$, it is equivalent to do addition in $\BZ_3$ (more easily achieved on a computer).

Ultimately, this means that the algorithm from \cite[Section 7.4]{JacobsonWilliams} can be performed as usual, except that we substitute elements of $\BZ_3$ for ideal generators from $\CO_K$. Then the result is $0\in \BZ_3$ iff $\varepsilon_d\in 1+2\CO_K$.

This procedure was implemented using the PyOpenCL library for Python. The modified infrastructure technique was implemented in C; for a given value of $d$, it calculated the reduced fundamental unit and returned an element of $\BZ_3$. This C routine was executed on a GPU, with each kernel fed a different value of $d\in \CD$. The outputs were returned to Python, which collated the results. The specific hardware used was a 12th-Gen Intel Core i5-1235U (CPU), and an Intel Iris $\text{X}^\text{e}$ Graphics card (GPU).

Additionally, our implementation uses a nonstandard technique for storing the baby step list. When there is enough memory available, it is best to implement a dictionary as a hash table because lookup is faster in this setting. But on a GPU, each kernel has limited memory, so this is not feasible. Instead, for each ideal, we add the tuple $(Q,P,\theta)$ to an unsorted list. Then, we add $(Q,P)$ to a Bloom filter (see \cite[Method 2]{Bloom} for a description of Bloom filters). Usually, it can quickly answer in the negative when asked if the dictionary contains a given ideal. Rarely, it will give false positives, in which case we must search the whole list. Importantly, it can never give false negatives.

The code that was used for the calculations can be seen at \url{https://github.com/JP1681/Fundamental-units-modulo-2O_K}. The authors acknowledge the help of GPT-4 in implementing the Bloom filter and related search functions.

Earlier, we mentioned that there are several ways of carrying out $*$. One such algorithm is known as NUCOMP, created by Shanks and described by him in \cite{Shanks1988}. Many versions of the algorithm exist (see \cite[Appendix A.1]{JacobsonWilliams}, \cite[Section 3]{JacobsonScheidlerWilliams},\cite[Section 10]{vdP2003},\cite[Section 2]{JacobsonvdP2002}). However, the authors could not get them to perform correctly. For example, they would either give invalid ideals, require more reduction steps than claimed, or produce an incorrect value of $\gamma$.

Ultimately, the authors were able to produce a working version of NUCOMP. Ours is a \say{hybrid}, based on \cite[Section 10]{vdP2003} and \cite[Section 2]{JacobsonvdP2002}, and is described here in full. Our algorithm is the same as in \cite{JacobsonvdP2002}, except that the value of $\gamma$ is calculated using \cite{vdP2003}.

As in those papers, we use binary quadratic forms instead of ideals, and we have two algorithms: NUCOMP, for composing different forms; and its special case NUDUPL, for composing a form with itself. We also describe a third algorithm, which mainly serves to convert between forms and ideals.

For NUCOMP (Algorithm \ref{alg:NUCOMP}), it is sensible to precompute $L=\lvert d \rvert^\frac{1}{4}$. Except where noted, all divisions are done \textbf{without} remainder.

\begin{algorithm}
\caption{NUCOMP}\label{alg:NUCOMP}
\begin{algorithmic}
\State \textbf{Input:} two binary quadratic forms, $\varphi_1=(u_1,v_1,w_1)$ and $\varphi_2=(u_2,v_2,w_2)$, both with discriminant $d$, and the value $L=\lvert d \rvert^\frac{1}{4}$.
\State \textbf{Output:} a near-reduced binary quadratic form $\varphi_3=(u_3,v_3,w_3)$, whose discriminant is $d$, and the value $\gamma\in K$ such that $\varphi_3=\frac{1}{\gamma}\varphi_1 \varphi_2$.
\\
\If {$w_1<w_2$}
    \State Swap $\varphi_1$ and $\varphi_2$.
\EndIf
\State Put $s\gets\frac{v_1+v_2}{2}$, $m\gets v_2-s$.
\State Use the extended Euclidean algorithm to find $(b,c,F)$ such that $bu_2+cu_1=F=\gcd(u_1,u_2)$.
\If {$F\vert s$}
    \State Put $G\gets F, A_x\gets G, B_x\gets mb, B_y\gets \frac{u_1}{G},C_y\gets \frac{u_2}{G}, D_y\gets\frac{s}{G}$.
\Else
    \State Use the extended Euclidean algorithm to find $(x,y,G)$ such that $xF+ys=G=\gcd(F,s)$.
    \State Put $H\gets \frac{F}{G}, B_y\gets \frac{u_1}{G}, C_y\gets \frac{u_2}{G},  D_y\gets\frac{s}{G}$.
    \State Put $l\gets y(bw_1+cw_2)\mod H,B_x\gets\frac{bm+lB_y}{H}$.
\EndIf
\State Put $b_x\gets B_x\mod B_y, b_y\gets B_y, x\gets 1, y\gets 0, z\gets 0$.
\While{$\lvert b_y \rvert > L$ and $b_x\neq 0$}
    \State Divide $b_y$ by $b_x$, then let $q$ and $t$ be the quotient and remainder respectively.
    \State Put $b_y\gets b_x, b_x\gets t, t\gets y-qx, y\gets x, x\gets t, z\gets z+1$.
\EndWhile
\If {$z$ is odd}
    \State Put $b_y\gets -b_y$ and $y\gets -y$.
\EndIf
\State Put $a_x\gets Gx$, $a_y\gets Gy$.
\If {$z\neq0$}
    \State Put $c_x\gets \frac{C_y b_x-mx}{B_y}, Q_1\gets b_y c_x, Q_2\gets Q_1+m, d_x\gets \frac{D_y b_x - w_2 x}{B_y}, Q_3\gets y d_x$.
    \State Put $Q_4 \gets Q_3 + D_y, d_y\gets \frac{Q_4}{x}$.
    \If {$b_x\neq 0$}
        \State Put $c_y\gets \frac{Q_2}{b_x}.$
    \Else 
        \State Put $c_y\gets \frac{c_x d_y - w_1}{d_x}.$
    \EndIf
    \State Put $u_3\gets b_y c_y - a_y d_y, w_3\gets b_x c_x - a_x d_x, v_3\gets G(Q_3+Q_4)-Q_1-Q_2$.
\Else 
    \State Put $Q_1\gets C_y b_x, c_x\gets \frac{Q_1-m}{B_y}, d_x\gets \frac{b_x D_y-w_2}{B_y}, u_3\gets b_y C_y, w_3\gets b_x c_x - G d_x, v_3\gets v_2-2Q_1$.
\EndIf
\State Put $t\gets2u_3,A\gets G(xt+yv_3), B\gets Gy, C\gets t$.
\State Put $\gamma\gets \frac{A+B\sqrt{d}}{C}$, and $\varphi_3\gets (u_3,v_3,w_3)$.
\State \Return $\varphi_3,\gamma$.
\end{algorithmic}
\end{algorithm}

For NUDUPL (Algorithm \ref{alg:NUDUPL}), again we precompute $L=\lvert d \rvert^\frac{1}{4}$. As above, except where noted, all divisions are done \textbf{without} remainder.
\begin{algorithm}
\caption{NUDUPL}\label{alg:NUDUPL}
\begin{algorithmic}
\State \textbf{Input:} a binary quadratic form, $\varphi=(u,v,w)$, with discriminant $d$, and the value $L=\lvert d \rvert^\frac{1}{4}$.
\State \textbf{Output:} a near-reduced binary quadratic form $\varphi_3=(u_3,v_3,w_3)$, whose discriminant is $d$, and the value $\gamma\in K$ such that $\varphi_3=\frac{1}{\gamma}\varphi^2$.
\\
\State Use the extended Euclidean algorithm to find $(x,y,G)$ such that $xu+yv=G=\gcd(u,v)$.
\State Put $A_x\gets G,B_y\gets \frac{u}{G}, D_y\gets\frac{v}{G}$.
\State Put $B_x\gets yw \mod B_y$.
\State Put $b_x\gets B_x, b_y\gets B_y, x\gets 1, y\gets 0, z\gets 0$.
\While{$\lvert b_y \rvert > L$ and $b_x\neq 0$}
    \State Divide $b_y$ by $b_x$, then let $q$ and $t$ be the quotient and remainder respectively.
    \State Put $b_y\gets b_x$, $b_x\gets t$, $t\gets y-qx$, $y\gets x$, $x\gets t$, $z\gets z+1$.
\EndWhile
\If{$z$ is odd}
    \State Put $b_y\gets -b_y$ and $y\gets -y$.
\EndIf
\State Put $a_x\gets Gx$, $a_y\gets Gy$.
\If {$z=0$}
    \State $d_x\gets \frac{b_x D_y - w}{B_y}, u_3\gets b_y^2, w_3\gets b_x^2, v_3\gets v-(b_x+b_y)^2+u_3+w_3, w_3\gets w_3-G d_x$.
\Else
    \State Put $d_x\gets \frac{b_x D_y - w x}{B_y}, Q_1\gets d_x y, d_y\gets Q_1+D_y, v_3\gets G(d_y+Q_1), d_y\gets \frac{d_y}{x}, u_3\gets b_y^2$.
    \State Put $w_3\gets b_x^2, v_3\gets v_3-(b_x+b_y)^2+u_3+w_3, u_3\gets u_3-a_y d_y, w_3\gets w_3 - a_x d_x$.
\EndIf
\State Put $t\gets 2u_3, A\gets G(xt+y v_3), B\gets Gy, C\gets t$.
\State Put $\gamma=\frac{A+B\sqrt{d}}{C},\varphi_3=(u_3,v_3,w_3)$.
\State \Return $\varphi_3,\gamma$.
\end{algorithmic}
\end{algorithm}

Our third algorithm is called NUCOMPchoose (Algorithm \ref{alg:NUCOMPchoose}). This algorithm takes two ideals (written as $\BZ$-modules), converts them to binary quadratic forms, and decides which technique (out of NUCOMP, NUDUPL and standard composition without reduction) should be used to compose them. The third technique mentioned, composition without reduction, simply produces the ideal product of its inputs, written as a $\BZ$-module. The reader can see how this is achieved in \cite[Section 5.4]{JacobsonWilliams}. If either ideal has small norm, NUCOMP and NUDUPL tend to produce large values during the computation, which can overflow. It is therefore preferable to use the ideal product in these cases (the authors have found that a norm less than or equal to 50 is appropriate, at least for $d\leq 10^{11}$).

\begin{algorithm}
\caption{NUCOMPchoose}\label{alg:NUCOMPchoose}
\begin{algorithmic}
\State \textbf{Input:} the primitive reduced ideals $I_1=[\frac{u_1}{2},\frac{v_1+\sqrt{d}}{2}]$ and $I_2=[\frac{u_2}{2},\frac{v_2+\sqrt{d}}{2}]$.
\State \textbf{Output:} the primitive (not necessarily reduced) ideal $I=[\frac{u}{2},\frac{v+\sqrt{d}}{2}]$, and the value $\gamma$ such that $I=\frac{1}{\gamma}{I_1 I_2}$.
\\
\State Put $u_1\gets \lvert u_1 \rvert, u_2\gets\lvert u_2 \rvert, v_1\gets v_1\pmod{u_1}, v_2\gets v_2\pmod{u_2}$.
\If{$u_1\leq 50$ or $u_2 \leq 50$}
    \State Find the ideal product of $I_1$ and $I_2$, and write it as $S[\frac{u}{2},\frac{v+\sqrt{d}}{2}]$ for some integer $S$.
    \State \Return $I=[\frac{u}{2},\frac{v+\sqrt{d}}{2}],\gamma=S$.
\EndIf
\State Put $w_1\gets\frac{v_1^2-D}{2u_1}$.
\If{$u_1=u_2$ and $v_1=v_2$}
    \State Use NUDUPL to compose $(\frac{u_1}{2},-v_1,w_1)$ with itself, and let the output be $(u_3,v_3,w_3)$ and $\gamma$.
\Else
    \State Put $w_2\gets\frac{v_2^2-D}{2u_2}$.
    \State Use NUCOMP to compose $(\frac{u_1}{2},-v_1,w_1)$ with $(\frac{u_2}{2},-v_2,w_2)$, and let the output be $(u_3,v_3,w_3)$ and $\gamma$.
\EndIf
\State Put $u=\lvert 2u_3\rvert$ and $v=-v_3\pmod{u}$.
\State \Return $I=[\frac{u}{2},\frac{v+\sqrt{d}}{2}]$ and $\gamma$.
\end{algorithmic}
\end{algorithm}



\clearpage 

\bibliographystyle{plain}
\bibliography{quadraticunits} 



\end{document}